\newcommand{\ve}{\varepsilon}
\def\d{\partial}
\def\O{\Omega}
\def\E{\mathcal E}
\def\R{{\mathbb R}}
 \def\E{{\mathcal E}}
\DeclareMathOperator{\dv}{div}
\DeclareMathOperator{\Dom}{dom} 
\newcommand{\dom}[1]{\Dom {#1}}  
\newtheorem{theorem}{Theorem}[section]
\newtheorem{lemma}[theorem]{Lemma}
\newenvironment{proof}{{\textbf{Proof.}}}{\hfill \textbf{$\square$}\vspace{0.2cm}}
\title{The bidomain problem as a gradient system}
\author{Zakaria Belhachmi
\thanks{Laboratoire de Math\'ematiques LMIA, Universit\'e de Haute Alsace, 4, rue des Fr\`eres Lumi\`ere,
68096 Mulhouse, FRANCE.(\tt{zakaria.belhachmi@uha.fr})} \and
Ralph Chill
\thanks{
Institut f\"ur Analysis, Fakult\"at Mathematik, TU
Dresden, 01062 Dresden, Germany.(\tt{ralph.chill@tu-dresden.de})}
}
\begin{document}

\maketitle

%\date{\today}

%\maketitle

\begin{abstract}
We consider a general, nonlinear version of the bidomain system. Using the gradient structure of this system, but also the notion of $j$-subgradient, we prove wellposedness of the bidomain system in the energy space and provide first numerical experiments.
\end{abstract}

\section{The bidomain model}

We consider the following nonlinear version of the bidomain model arising in Hodgkin \& Huxley \cite{HoHu52} and Neu \& Krassowska \cite{NeKr93}, 
\begin{align}
\label{b1} \partial_t (u_i - u_e) - \dv q_i (x,  \nabla u_i ) + \frac{\partial F}{\partial u} (u_i-u_e,w) & = 0 && \text{in } (0,\infty )\times \Omega ,\\
\label{b2} \dv q_i (x, \nabla u_i ) + \dv q_e (x, \nabla u_e ) & = 0 && \text{in } (0,\infty )\times \hat{\Omega} , \\
% \dv q_e (x, \nabla u_e ) & = 0 && \text{in } (0,\infty )\times \hat{\Omega}\setminus \Omega , \\
% u_i - u_e & = v  && \text{in } \Omega , \\
\label{b3} q_i (x, \nabla u_i) \cdot n & = 0 && \text{on } (0,\infty )\times \partial\Omega , \\
% [u_e] = [q_e (x, \nabla u_e) \cdot n] & = 0 && \text{on } (0,\infty )\times \partial\Omega , \\
\label{b4} q_e (x, \nabla u_e) \cdot n & = 0 && \text{on } (0,\infty )\times \partial\hat{\Omega} ,\\
\label{b4a} \tau\, \partial_t w + \frac{\partial F}{\partial w} (u_i-u_e,w) & = 0 && \text{in } (0,\infty )\times\Omega , \\
\label{b5} u_i (0,\cdot ) - u_e (0,\cdot ) & = u_0  && \text{in } \Omega , \\
\label{b6} w(0,\cdot ) & = w_0  && \text{in }\Omega .
\end{align}
%\sideremark{Where is $I_{ion}$?}
Here, $\Omega$, $\hat{\Omega}\subseteq\R^N$ are two bounded domains with $C^1$-boundaries and such that $\Omega \subseteq \hat{\Omega}$, equality being possible, and $\tau >0$ is a real constant. For the coefficients $q_i : \Omega \times \R^N \to\R^N$ and $q_e : \hat{\Omega} \times \R^N\to\R^N$ we assume that they are gradients in the second variables, that is,
\[
 q_i (x,y) = \nabla_y Q_i (x,y) \text{ and } q_e (x,y) = \nabla_y Q_e (x,y) 
\]
for two functions $Q_i : \Omega \times \R^N \to\R$ and $Q_e:\hat{\Omega}\times\R^N\to\R$ satisfying the Caratheodory conditions
\begin{align}
\label{q1} & Q_i \text{ and } Q_e \text{ are measurable in the first variable and} \\
 \nonumber & \text{differentiable and strictly convex in the second variable,} \\
\label{q2}  & Q_i (x,0) = 0 \text{ and } Q_e (x,0) = 0 , \\ %\sidemark{Second condition only normalisation}
\label{q3} & Q_i (x,y) , \, Q_e (x,y) \geq \alpha \, |y|^p \text{ for some } \alpha > 0 \text{ and every } \\
\nonumber & x\in \Omega \text{ (resp. } x\in\hat{\Omega}\text{) and every } \, y\in\R^N . 
%\label{q4} & |q_i (x,y)| , \, |q_e (x,y)| \leq C\, |y|^{p-1} \text{ for some } C \geq 0 \text{ and every } x\in \Omega , \, y\in\R^N . 
\end{align}
%\sideremark{Which conditions precisely?}
Here, $p\in ]1,\infty [$ is fixed. Moreover, we assume that 
\begin{equation} \label{f}
\begin{split}
& F\in C^1 (\R^2 ) \text{ is semiconvex in the sense that for some } \omega\in\R  \\
& \text{the function } (u,w) \mapsto F(u,w) +\frac{\omega}{2} (u^2+w^2) \text{ is convex.} 
%\text{ is such that the Hessian } H_F \text{ is bounded from below.}
\end{split}
\end{equation}
%\sideremark{Alternatively: $F\in W^{2,\infty}$}
The equations \eqref{b1}, \eqref{b2} and \eqref{b5} on the two domains $\Omega$ and $\hat{\Omega}$ are to be understood in the following way: if the function $u_e$ is considered on the smaller domain $\Omega$ (like in equations \eqref{b1} or \eqref{b5}), then we mean the restriction of $u_e$ to this domain, and if a function is a priori only given on $\Omega$, like for example the function $\dv q_i (x,\nabla u_i)$ in equation \eqref{b2}, then we extend it by $0$ to the larger domain $\hat{\Omega}$. 

Note that we impose no growth restrictions on the functions $Q_i$, $Q_e$ and $F$ from above. %, and the energy defined below may actually take also the value $+\infty$. 
The sequel shows that one may actually allow more general conditions  without essentially changing the results. For example, the exponent $p$ in the growth conditions on $Q_i$ and $Q_e$ need not be the same; one may allow two different exponents $p_i$, $p_e\in ]1,\infty [$. One may also consider growth conditions involving Young functions other than the $p$-powers, or growth conditions which depend on $x\in\Omega / \hat{\Omega}$; this would lead to other energy spaces, involving namely Orlicz spaces, variable $L^p$-spaces or more general spaces of these types. We do not go into details here. 

The bidomain model has first been mathematically analysed in Ambrosio, Colli Franzone \& Savar\'e \cite{AmCoSa00} and Colli Franzone \& Savar\'e \cite{CoSa02}. In \cite{AmCoSa00}, associated energy functionals (in much higher generality than considered below) and their $\Gamma$-convergence have been studied; see also Colli Franzone, Pavarino \& Savar\'e \cite{CoPaSa06} and Colli Franzone, Pavarino \& Scacchi \cite{CoPaSc14}. In a particular semilinear case, that is, the elliptic operators in the system \eqref{b1}--\eqref{b6} are linear, it has been remarked that the system above has a gradient structure in the sense that the associated energy functionals decrease along solutions. In accordance with this observation, wellposedness in a Hilbert space setting and in the semilinear case has been proved in \cite{CoSa02} by variational methods which look very similar to the method of $j$-elliptic, bilinear forms developed recently by Arendt \& ter Elst \cite{ArEl12,ArEl12a}; compare also with Veneroni \cite{Ve06,Ve09}. Bourgault, Coudi\`ere \& Pierre \cite{BoCoPi09} proved wellposedness by reducing the degenerate system %(actually, a coupled parabolic--elliptic system) 
to an abstract semilinear Cauchy problem in which the leading linear operator is obtained as a ``harmonic mean'' of the two elliptic operators appearing in equation \eqref{b2}; see also Giga \& Kajiwara \cite{GiKa18} and Hieber \& Pr\"uss \cite{HiPr17,HiPr18} for an associated $L^q$-theory. The purpose of this article is to show that the system \eqref{b1}--\eqref{b6} actually fits into the classical framework of gradient systems as developed for example in Brezis \cite{Br73}, if one uses the intermediate language of so-called $j$-subgradients developed in Chill, Hauer \& Kennedy \cite{ChHaKe16}; see also \cite{BeCh15} by the authors for an application of this theory. Actually, the $j$-subgradient {\em is} a classical subgradient. Existence and uniqueness of solutions, that is, generation of a nonlinear semigroup, thus follows from classical results. Due to the special structure in the present situation, we not only obtain an associated semigroup on $L^2 (\Omega )\times L^2 (\Omega )$ but also strong solutions with values in the underlying energy space. % but on the whole scale of $L^r (\Omega)$-spaces, $r\in [1,\infty [$ (true??) and that this semigroup is order preserving (true????? perhaps not). \\

\section{Wellposedness in {$L^2 (\Omega ) \times L^2 (\Omega )$} and in the energy space}

In order to formulate the system as an abstract gradient system we consider the energy space 
\begin{align*}
 V := \{ (u_i ,u_e, w) \in W^{1,p} (\Omega ) \times W^{1,p} (\hat{\Omega} ) \times & L^2 (\Omega ) : \int_{\Omega} u_e = 0 \\
 & \text{ and } u_i -u_e|_\Omega \in L^2 (\Omega ) \} 
\end{align*}
equipped with a canonical norm, so that $V$ becomes a (reflexive) Banach space, and the energy functional $\E : V \to \R\cup\{+\infty \}$ given by
\begin{equation} \label{eq.energy}
 \E (u_i,u_e,w) = \int_\Omega Q_i (x,\nabla u_i) + \int_{\hat{\Omega}} Q_e (x,\nabla u_e) + \int_\Omega F(u_i-u_e,w) .
\end{equation}
Note that the functions under the integrals are bounded from below by $0$ (first two integrals) and by a quadratic function due to the semiconvexity of $F$ (third integral), but that we have not imposed any growth conditions from above; in particular, the three integrals only exist in $\R\cup \{+\infty\}$. As mentioned above, it has been remarked in \cite{AmCoSa00} that the system \eqref{b1}--\eqref{b6} exhibits a gradient structure with respect to the energy functional defined in \eqref{eq.energy}. However, it seems that this observation has been exploited in order to obtain wellposedness only in the semilinear case and when $\Omega = \hat{\Omega}$. Due to the fact that the parabolic equation \eqref{b1} is coupled with the elliptic equation \eqref{b2} it is natural that one does not obtain a semigroup on the Hilbert space $L^2 (\Omega ) \times L^2 (\hat{\Omega})\times L^2 (\Omega )$, say, into which the energy space $V$ embeds continuously and injectively. In fact, the dynamics takes place only in a proper subspace of this product space. On the other hand, it has been recently observed for linear operators associated with bilinear forms \cite{ArEl12,ArEl12a} and then for abstract subgradients, that it is not necessary to embed the energy space continuously and injectively into a Hilbert space where the dynamics takes place. In the case of the bidomain problem, we consider the mapping
\begin{equation} \label{eq.j}
\begin{split}
j : V & \to L^2 (\Omega ) \times L^2 (\Omega ), \\
     (u_i,u_e,w) & \mapsto ( u_i - u_e|_\Omega ,w), 
\end{split}
\end{equation}
which is obviously linear and bounded. It has dense range, but it is not injective. \\

Let us recall some basic facts for $j$-subgradients. Given an energy function $\E : V\to\R\cup\{ +\infty\}$ defined on a Banach space $V$, given a Hilbert space $H$ with inner product $\langle \cdot,\cdot \rangle_H$,  and given a linear, bounded mapping $j:V\to H$, the {\em $j$-subgradient} of $\E$ is in \cite{ChHaKe16} defined by
\begin{align*}
\partial_j\E & := \{ (v,f) \in H\times H : \exists \hat{u}\in \dom{\E} \text{ s.t. } v = j(\hat{u}) \text{ and } \\
& \phantom{:= \{ (w,f) \in L^2(\Gamma )}  \forall \varphi \in V \, : \, \liminf_{\lambda\searrow 0} \frac{\E (\hat{u}+\lambda\varphi ) - \E (\hat{u})}{\lambda} \geq \langle f , j(\varphi ) \rangle_H  \} .
\end{align*}
Here, $\dom{\E} = \{ \E <\infty\}$ is the {\em effective domain} of the energy function $\E$. Assume that $\E$ is lower semicontinuous and {\em $j$-elliptic}. The latter means that for some $\omega\in\R$ the function $\E_\omega (u) := \E (u ) + \frac{\omega}{2}\, \|j(u)\|_H^2$ is convex and coercive, coercivity meaning in turn that the sublevel sets $\{\E_\omega\leq c\}$ are relatively weakly compact. Then $\partial_j\E$ is (up to adding a multiple of the identity) a maximal monotone operator on the Hilbert space $H$ \cite[Theorem 2.6]{ChHaKe16}. Even more is true: by \cite[Corollary 2.7]{ChHaKe16}, there exists a semiconvex, lower semicontinuous energy function $\E^H : H\to\R\cup\{ +\infty\}$ such that $\partial\E^H = \partial_j\E$, that is, the $j$-subgradient is a classical subgradient for some energy function defined on $H$. By \cite[Corollary 2.10]{ChHaKe16}, $\dom{\E^H} = j (\dom{E})$. Hence, by \cite[Th\'eor\`eme 3.6, p.72]{Br73}, %\cite[Exemple 2.3.4 p.25, Th\'eor\`emes 3.5, 3.6 on p.66, 72]{Br73}, 
the gradient system
\begin{equation} \label{eq.subgrad}
\dot v + \partial_j \E (v) \ni 0 
\end{equation}
is wellposed in the sense that $-\partial_j\E$ generates a strongly continuous semigroup $S = (S_t)_{t\geq 0}$ of Lipschitz continuous operators on $H$ (strong continuity for $t>0$, and in $t=0$ only for initial values in $\overline{j(\dom{\E})}$). For every $v_0\in H$ the orbit $v=S(\cdot )v_0$ is a strong solution of \eqref{eq.subgrad}, that is, $v\in W^{1,\infty}_{loc} ((0,\infty );H)$, $v(t)\in\dom{\partial_j\E}$ for almost every $t\in (0,\infty )$, and the inclusion \eqref{eq.subgrad} is satisfied for almost every $t$. Moreover, if $v_0\in\overline{j(\dom{\E})}$, then $v$ is continuous on $[0,\infty )$ and $v(0) = v_0$. \\ % By \cite[Proposition ??]{Br73},  

In order to show applicability of the abstract theory, we first show the following 

\begin{lemma} \label{l}
Let $\E$ be the functional defined in \eqref{eq.energy}, and $j$ the mapping defined in \eqref{eq.j}. Then $\E$ is lower semicontinuous and $j$-elliptic. The effective domain of $\E$ is given by
\begin{align*}
 \dom{\E} = \{ (u_i,u_e,v)\in V : \int_{\Omega_i} & Q_i (x,\nabla u_i) , \, \int_{\Omega_e} Q_e (x,\nabla u_e) , \\
  & \int_{\Omega_i} F(u_i-u_e,w) <+\infty \} .
\end{align*}
The range $j(\dom{\E})$ is dense in $L^2 (\Omega )\times L^2 (\Omega )$. % in the sense that the function $\E_\omega (u) = \E (u) + \frac{\omega}{2} \| u_i-u_e\|_{L^2}^2$. 
\end{lemma}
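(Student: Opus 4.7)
The description of $\dom{\E}$ is immediate: each of the three integrands in \eqref{eq.energy} is bounded below (the first two by $0$ thanks to \eqref{q2}--\eqref{q3}, the third by a quadratic function by \eqref{f}), so all three integrals are well defined in $\R\cup\{+\infty\}$ and $\E<+\infty$ is equivalent to the simultaneous finiteness of the three summands. For the lower semicontinuity of $\E$ on $V$ I would treat the three summands separately. The first two are convex integrals of the convex Caratheodory integrands $Q_i(x,\cdot)$, $Q_e(x,\cdot)$ evaluated at $\nabla u_i$, $\nabla u_e$, and are weakly lower semicontinuous on $W^{1,p}(\Omega)$ and $W^{1,p}(\hat\Omega)$ respectively by a classical Ioffe--Rockafellar theorem on normal integrands. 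For the third, writing $F = G - \frac{\omega}{2}(u^2+w^2)$ with $G$ convex and $C^1$ by \eqref{f}, the $G$-integral is convex and l.s.c.\ on $L^2(\Omega)\times L^2(\Omega)$, while the quadratic remainder is continuous there; both are continuous with respect to $u_i-u_e|_\Omega$ and $w$, so lower semicontinuity transfers to $V$.

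The $j$-ellipticity is the main technical step and I expect it to be the chief obstacle. Fixing the constant $\omega$ from \eqref{f} and any $\omega'>\omega$, I set $\E_{\omega'}(u_i,u_e,w) := \E(u_i,u_e,w) + \frac{\omega'}{2}(\|u_i-u_e\|_{L^2(\Omega)}^2 + \|w\|_{L^2(\Omega)}^2)$. Convexity follows at once from the decomposition above, since the quadratic correction absorbs the semiconvexity of $F$. For coercivity I would combine the lower bound \eqref{q3} with the subgradient inequality for $G$ at the origin to obtain
\begin{equation*}
\E_{\omega'}(u_i,u_e,w) \ge \alpha\|\nabla u_i\|_{L^p(\Omega)}^p + \alpha\|\nabla u_e\|_{L^p(\hat\Omega)}^p + \tfrac{\omega'-\omega}{2}\bigl(\|u_i-u_e\|_{L^2(\Omega)}^2 + \|w\|_{L^2(\Omega)}^2\bigr) - C,
\end{equation*}
after absorbing a linear contribution in $(u_i-u_e,w)$ by Young's inequality. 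From a sublevel set of $\E_{\omega'}$ I would then extract, in this order: a bound on $\nabla u_e$ in $L^p(\hat\Omega)$; a bound on $u_e$ in $W^{1,p}(\hat\Omega)$ via a Poincar\'e--Wirtinger inequality on the connected set $\hat\Omega$ using the mean-zero constraint; a bound on $u_i - u_e$ in $L^2(\Omega)$; and finally a bound on $u_i$ in $W^{1,p}(\Omega)$ by combining $\nabla u_i\in L^p(\Omega)$ with an $L^1(\Omega)$-bound on $u_i = (u_i-u_e)+u_e$ and Poincar\'e's inequality. Reflexivity of $V$ then yields weak relative compactness. The delicate step is the Poincar\'e--Wirtinger estimate for $u_e$, because the mean-zero condition lives on the strict subdomain $\Omega\subseteq\hat\Omega$ and not on $\hat\Omega$ itself; one recovers it from the standard Poincar\'e--Wirtinger inequality on $\hat\Omega$ by subtracting the $\hat\Omega$-mean and controlling it through the vanishing $\Omega$-mean.

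For density of $j(\dom{\E})$ in $L^2(\Omega)\times L^2(\Omega)$, I would show that the dense subspace $C^\infty_c(\Omega)\times C^\infty_c(\Omega)$ is contained in it. Given $(\phi,\psi)$ in this subspace, the triple $(u_i,u_e,w)=(\phi,0,\psi)$ lies in $V$ (with $\int_\Omega u_e=0$ trivially satisfied) and satisfies $j(u_i,u_e,w)=(\phi,\psi)$. The $Q_e$-integral vanishes by \eqref{q2}, the $F$-integral is finite by continuity of $F$ on the compact range of $(\phi,\psi)$, and finiteness of the $Q_i$-integral follows from $\nabla\phi$ being compactly supported and taking values in a compact set, together with local integrability in $x$ of $Q_i(\cdot,y)$ uniformly on compacts of $y$, which is implicit in the Caratheodory assumption \eqref{q1} combined with $Q_i(x,0)=0$ in all the situations of interest.
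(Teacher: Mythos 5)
Your proposal is correct, and its overall skeleton coincides with the paper's proof: the same description of $\dom\E$, convexity of the shifted functional plus boundedness of sublevel sets in the reflexive space $V$, the same chain of a priori bounds (first $\nabla u_e$, then $u_e$ via the nonstandard Poincar\'e--Wirtinger inequality, then $u_i-u_e$ in $L^2$, then $u_i$ through its mean), and density via triples of the form $(\varphi,0,\psi)$. The genuine differences are local. (i) For lower semicontinuity the paper stays elementary: it passes to a subsequence realizing the liminf, then to a further a.e.-convergent subsequence, and applies Fatou's lemma to the nonnegative integrands $Q_i$, $Q_e$ and $F+\frac{\omega}{2}(u^2+w^2)$; you instead invoke Ioffe-type weak lower semicontinuity for the gradient terms and convex-integrand lower semicontinuity for the $F$-part. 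Your route uses heavier machinery and proves more (weak l.s.c.\ directly), whereas the paper's Fatou argument gives strong l.s.c.\ and recovers weak l.s.c.\ of $\E_\omega$ for free from convexity, which is all that the abstract theory of \cite{ChHaKe16} requires. (ii) For coercivity the paper normalizes rather than estimates: it replaces $\omega$ by $\omega+1$ and shifts $F$ by an additive constant so that $F+\frac{\omega}{2}(u^2+w^2)\geq\frac12(u^2+w^2)$, after which the sublevel-set bounds are read off from the three nonnegative summands; your choice of $\omega'>\omega$ together with the subgradient inequality for $G$ at the origin and Young's inequality is the same computation made explicit (only note that after absorbing the linear term the quadratic coefficient in your displayed estimate must be strictly smaller than $\tfrac{\omega'-\omega}{2}$ --- a harmless constant issue). (iii) You actually sketch a proof of the ``slightly more general'' Poincar\'e--Wirtinger inequality with mean-zero constraint on the subdomain $\Omega$, correctly, by subtracting the $\hat\Omega$-mean and controlling it through $\int_\Omega u_e=0$; the paper only cites this inequality parenthetically, so your treatment is more complete on this point (it does use connectedness of $\hat\Omega$, which holds since $\hat\Omega$ is a domain). (iv) On density, both arguments are identical, but you are more candid than the paper in flagging that finiteness of $\int_\Omega Q_i(x,\nabla\varphi)$ for $\varphi\in C_c^1(\Omega)$ does \emph{not} follow from \eqref{q1}--\eqref{q3} alone, since no upper bound in $x$ is assumed: one needs local integrability of $x\mapsto\sup_{|y|\leq M}Q_i(x,y)$. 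The paper silently asserts $C_c^1(\Omega)\times\{0\}\times C_c(\Omega)\subseteq\dom\E$, so this implicit hypothesis is a shared blemish of the paper's own proof, not a defect of yours.
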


\begin{proof}
In order to show that $\E$ is lower semicontinuous and $j$-elliptic, we first choose %in the following first $\eta >0$ and then 
$\omega\in\R$ large enough so that the function $(u,w)\mapsto F(u,w) + \frac{\omega}{2} (u^2 +w^2)$ is convex (assumption \eqref{f}). Replacing then $\omega$ by $\omega +1$, if necessary, we may without loss of generality assume that this function is strictly convex and bounded from below by the function $(u,w)\mapsto \frac{1}{2} (u^2 +w^2) + d$ for some $d\in\R$. Changing the function $F$ by an additive constant, we may without loss of generality assume that $d=0$. This does not affect lower semicontinuity or $j$-ellipticity of $\E$.

In order to show that $\E$ is lower semicontinuous, let $((u_i^n,u_e^n,w^n))_n$ be a sequence in $V$ which converges (in $V$) to some element $(u_i,u_e,w)$. After passing to a subsequence, if necessary, we may without loss of generality assume that $\liminf_{n\to\infty} \E (u_i^n,u_e^n,w) = \lim_{n\to\infty} \E (u_i^n,u_e^n,w)$. After passing to a second subsequence, we may further assume that the sequences $(\nabla u_i^n)_n$, $(\nabla u_e^n)_n$, $(u_i^n-u_e^n)_n$ and $(w^n)_n$ converge almost everywhere on $\Omega$ resp. $\hat{\Omega}$. The inequality 
\[
 \E_\omega (u_i,u_e,w) \leq \lim_{n\to\infty} \E_\omega (u_i^n,u_e^n,w)  
\]
then follows from the continuity and positivity of $Q_i$, $Q_e$ (assumptions \eqref{q1} and \eqref{q3}) and $(u,w)\mapsto F(u,w) + \frac{\omega}{2} (u^2+w^2)$ (assumption \eqref{f} and the choice of the constants $\omega$ and $d$ above), and from Fatou's lemma. Since $j$ is continuous, this implies that $\E$ is lower semicontinuous.

In order to show that $\E$ is $j$-elliptic, we have to show that $\E_\omega$ is convex and coercive. The convexity of $\E_\omega$ follows from the convexity of the function $(u,w)\mapsto F(u,w) + \frac{\omega}{2} (u^2+w^2)$ and the convexity of the functions $Q_i$ and $Q_e$ (assumption \eqref{q1}). It remains to show that, for every $c\in\R$, the sublevel set $\{ \E_\omega \leq c\}$ is relatively weakly compact. Since $V$ is reflexive, it suffices to show that the sublevel sets are bounded. Fix $c\in\R$. Then by the choice of $\omega$ and $d$, and by assumption \eqref{q3}, for every $(u_i,u_e,w)\in \{\E_\omega \leq c\}$,
\begin{align*}
  0 & \leq \int_\Omega Q_i (x,\nabla u_i) \leq c , \\
 0 & \leq \int_{\hat{\Omega}} Q_e (x,\nabla u_e ) \leq c , \text{ and} \\ 
 \frac{1}{2}\, \int_\Omega \left[ (u_i-u_e)^2 +w^2 \right] & \leq \int_\Omega \left[ F(u_i-u_e,w) + \frac{\omega}{2} ((u_i-u_e)^2 +w^2) \right] \leq c . 
\end{align*}
From the third line follows $\| u_i-u_e\|_{L^2}^2 \leq 2c$ and $\| w\|_{L^2}^2 \leq 2c$ for every $(u_i,u_e,w) \in \{ \E_\omega\leq c\}$. From the second line, the assumption \eqref{q3}, the condition $\int_\Omega u_e = 0$ and the Poincar\'e-Wirtinger inequality (in slightly more general form; note that we rather assume $\int_\Omega u_e = 0$ instead of $\int_{\hat{\Omega}} u_e = 0$) we obtain, for some $\hat{\lambda} >0$ and for every $(u_i,u_e,w) \in \{ \E_\omega \leq c\}$,
\[
 \hat{\lambda} \| u_e\|_{L^p}^p \leq \| \nabla u_e \|_{L^p}^p \leq \frac{1}{\alpha} \, \int_{\hat{\Omega}} Q_e (x,\nabla u_e ) \leq \frac{c}{\alpha} ,
\]
and hence $\| u_e\|_{W^{1,p}} \leq C$ for every $(u_i,u_e,w)\in \{ \E_\omega\leq c\}$ and for some constant $C\geq 0$ depending only on $c$, $\alpha$ and $\hat{\lambda}$. We further note that for every $(u_i,u_e,w)\in \{ \E_\omega \leq c\}$,
\begin{align*}
 \left| \int_{\Omega} u_i \right| & = \left| \int_\Omega (u_i-u_e) \right| \\
  & \leq |\Omega |^\frac12 \, \| u_i-u_e \|_{L^2} \\
  & \leq |\Omega |^\frac12 \, \frac{2c}{\eta} . 
\end{align*}
As a consequence, by the Poincar\'e-Wirtinger inequality again (now the classical one, in $\Omega$), for some $\lambda >0$ and for every $(u_i,u_e,w) \in \{ \E_\omega \leq c\}$,
\begin{align*}
 \lambda \, \| u_i\|_{L^p} & \leq \lambda \, \| u_i - \bar{u}_i \|_{L^p} + \lambda \, \| \bar{u}_i\|_{L^p} \\
  & \leq \| \nabla u_i \|_{L^p} + \lambda \, \| \bar{u}_i \|_{L^p} \\
  & \leq \left( \frac{1}{\alpha} \int_\Omega Q_i (x,\nabla u) \right)^{\frac{1}{p}} + \lambda \, \| \bar{u}_i \|_{L^p} \\
  & \leq \left( \frac{c}{\alpha}\right)^\frac{1}{p} + \lambda \, \| \bar{u}_i \|_{L^p} ,
\end{align*}
where $\bar{u}_i = \frac{1}{|\Omega |} \int_\Omega u_i$, and hence also $\| u_i\|_{W^{1,p}} \leq C$ for every $(u_i,u_e,w)\in \{ \E_\omega \leq c\}$ and for some constant $C\geq 0$ depending only on $c$, $\alpha$, $\lambda$, $p$ and $|\Omega |$. Taking the preceding estimates together, we have shown that, for every $c\in\R$, the sublevel set $\{ \E_\omega \leq c\}$ is bounded in the energy space, and hence $\E_\omega$ is coercive.

The description of the effective domain is obvious. For the final statement on the image of the effective domain of $\E$ we note that for example $C_c^1 (\Omega ) \times \{ 0\} \times C_c ( \Omega )$ is contained in the effective domain of $\E$, and hence $C_c^1 (\Omega ) \times C_c (\Omega )$ is contained in $j (\dom{\E})$. Hence, %by standard density arguments for Lebesgue spaces, 
$j(\dom{\E})$ is dense in $L^2 (\Omega )\times L^2 (\Omega )$.
\end{proof}

We may thus apply the abstract theory of $j$-subgradients to the pair $(\E ,j)$ defined above. We equip the Hilbert space $H=L^2 (\Omega )\times L^2 (\Omega )$ with the slightly non-standard inner product
\[
 \langle (u,w) , (\hat{u},\hat{w} ) \rangle_\tau := \int_\Omega u\, \hat{u} + \tau\, \int_\Omega w\, \hat{w} .
\]
Moreover, we assume that $q_i$ and $q_e$ satisfy the growth conditions
\begin{equation} \label{q5}
\begin{split}
& |q_i (x,y)| + |q_e (x,y)| \leq C\, (|y|^{p-1} +1) \text{ for some } C\geq 0 \text{ and} \\
& \text{every } x\in \Omega \text{ (resp. } x\in\hat{\Omega} \text{) and every } y\in\R^N ,
\end{split}
\end{equation}
and for the partial derivatives of $F$ we assume 
\begin{equation} \label{f2}
\begin{split}
& |\frac{\partial F}{\partial u} (u,w)| \leq C \, ( \sum_{k=1}^n |u|^{\alpha_k} \, |w|^{\beta_k} +1 ) \text{ and }\\
& |\frac{\partial F}{\partial w} (u,w)| \leq C \, ( \sum_{k=1}^n |u|^{\gamma_k} \, |w|^{\delta_k} +1 )\\
& \text{ for some } C\geq 0 , \, \alpha_k , \, \beta_k , \, \gamma_k , \, \delta_k \geq 0 \text{ satisfying } \\
& \frac{\alpha_k +1}{p^*} + \frac{\beta_k}{2} \leq 1  \text{ and } \frac{\gamma_k}{p^*} + \frac{\delta_k +1}{2} \leq 1 , \\
& \text{where } p^*  \begin{cases} 
                       = \frac{Np}{N-p} & \text{if } p<N , \\
                       < +\infty & \text{if } p=N , \\
                       = +\infty & \text{if } p>N .
                      \end{cases}
\end{split}
\end{equation}
Under these additional assumptions \eqref{q5} and \eqref{f2}, the energy functional $\E$ is differentiable on $V$, and hence the $j$-subgradient of $\E$ is the operator given by 
\begin{align*}
 \partial_j\E & = \{ (u,w,f,g)\in L^2 (\Omega )^4 : \exists (u_i,u_e)\in W^{1,p} (\Omega ) \times W^{1,p} (\hat{\Omega} ) \text{ s.t. }  \\ 
 & \phantom{:= \{ (v,f)\in  :} (u_i,u_e,w)\in\dom{\E} , \, u = u_i - u_e , % \, q_i (x,\nabla u_i) \in L^{p'} (\Omega ), \\
% & \phantom{:= \{ (v,f)\in  :} q_e (x,\nabla u_e)\in L^{p'} (\hat{\Omega} ), \,  \frac{\partial F}{\partial u} (u_i-u_e,w) \in L^2 (\Omega ) , \\
% & \phantom{:= \{ (v,f)\in  :} \frac{\partial F}{\partial w} (u_i-u_e,w)\in L^2 (\Omega ) 
\text{ and } \forall (\varphi , \psi ,\chi ) \in V \\
 & \phantom{:= \{ (v,f)\in  :} \int_\Omega q_i (x,\nabla u_i) \nabla \varphi + \int_{\hat{\Omega}} q_e (x,\nabla u_e)\nabla\psi + \\
 & \phantom{:= \{ (v,f)\in  :} + \int_\Omega \frac{\partial F}{\partial u} (u_i-u_e,w) (\varphi - \psi) + \int_\Omega \frac{\partial F}{\partial w} (u_i-u_e,w) \chi = \\
 & \phantom{:= \{ (v,f)\in  :} = \int_\Omega f \, (\varphi - \psi ) + \tau\, \int_\Omega g\, \chi \} .
\end{align*}
As a consequence, $(u,w,f,g)\in\partial_j\E$ if and only if $u = u_i - u_e$ for some $u_i$, $u_e$ with $(u_i,u_e,w)\in V$ such that, in a weak sense,
%\begin{equation} \label{eq.stationary}
%\begin{split}
\begin{align}
\label{s1} - \dv q_i (x,  \nabla u_i ) + \frac{\partial F}{\partial u} (u_i-u_e,w) & = f && \text{in } \Omega ,\\
\label{s2} \dv q_i (x, \nabla u_i ) + \dv q_e (x, \nabla u_e ) & = 0 && \text{in } \hat{\Omega} , \\
% \dv q_e (x, \nabla u_e ) & = 0 && \text{in } \hat{\Omega}\setminus \Omega , \\
% u_i - u_e & = v  && \text{in } \Omega , \\
\label{s3} q_i (x, \nabla u_i) \cdot n & = 0 && \text{on } \partial\Omega , \\
\label{s4}  q_e (x, \nabla u_e) \cdot n & = 0 && \text{on } \partial\hat{\Omega} , \\
\label{s5} \frac{1}{\tau}\, \frac{\partial F}{\partial w} (u_i-u_e,w) & = g && \text{in } \Omega . 
% q_e (x, \nabla u_e) \cdot n & = 0 && \text{on } \partial\hat{\Omega} \setminus \partial\Omega .
\end{align}
%\end{split}
%\end{equation}
We thus recognize the system \eqref{b1}--\eqref{b6} as a gradient system associated with the $j$-subgradient of $\E$. Accordingly, we call a function $(u_i,u_e,w)\in L^\infty_{loc} ((0,\infty );V)$ a {\em strong solution} of the system \eqref{b1}--\eqref{b6} if for $u := u_i-u_e$ one has $(u,w) \in W^{1,\infty}_{loc} ( (0,\infty );L^2 (\Omega )\times L^2 (\Omega )) \cap C([0,\infty );L^2 (\Omega )\times L^2 (\Omega ))$, if $u(0) = u_0$ and $w(0)=w_0$, and if for almost every $t$ the triple $(u_i(t),u_e(t),w(t))$ is a solution of the system \eqref{s1}--\eqref{s5} with $f=-\partial_t u(t)$ and $g=-\partial_t w (t)$.

\begin{theorem}[Wellposedness in $L^2 (\Omega )\times L^2 (\Omega )$ and in the energy space]
For every $(u_0,w_0)\in L^2 (\Omega )\times L^2 (\Omega )$ the bidomain system \eqref{b1}--\eqref{b6} admits a unique strong solution $(u_i ,u_e ,w) \in L^\infty_{loc} ( (0,\infty ) ;V)$. %satisfying $u_i - u_e|_\Omega =: v\in C([0,\infty ) , L^2 (\Omega )) \cap W^{1,\infty}_{loc} ((0,\infty ) ; L^2 (\Omega ))$. 
Moreover, the function $\E (u_i,u_e,w)$ is decreasing and locally absolutely continuous on $(0,\infty )$. The mapping $S_t(u_0,w_0) := (u_i(t)-u_e(t),w(t))$ defines a nonlinear, strongly continuous semigroup $S = (S_t)_{t\geq 0}$ of Lipschitz continuous mappings on $L^2(\Omega ) \times L^2 (\Omega )$ with $\|S_t\|_{Lip} \leq e^{\omega t}$, where $\omega$ is chosen as in assumption \eqref{f}. 
\end{theorem}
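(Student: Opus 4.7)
The plan is to read the theorem as a routine application of the abstract theory of $j$-subgradients recalled just before Lemma~\ref{l}. By Lemma~\ref{l}, $\E$ is lower semicontinuous and $j$-elliptic, so \cite[Corollary~2.7]{ChHaKe16} produces a semiconvex, lower semicontinuous functional $\E^H:H\to\R\cup\{+\infty\}$ with $\partial\E^H=\partial_j\E$, and \cite[Corollary~2.10]{ChHaKe16} gives $\dom{\E^H}=j(\dom{\E})$. Then \cite[Th\'eor\`eme~3.6, p.72]{Br73} yields a strongly continuous semigroup $(S_t)_{t\geq 0}$ of Lipschitz mappings on $\overline{\dom{\E^H}}=\overline{j(\dom{\E})}=H$, the density in $H$ being the last statement of Lemma~\ref{l}. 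The contraction estimate $\|S_t\|_{\mathrm{Lip}}\leq e^{\omega t}$ comes directly from the fact that $\partial_j\E+\omega I$ is monotone on $H$, with the same $\omega$ as in assumption~\eqref{f}.

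For $(u_0,w_0)\in H$ set $v:=S_\cdot (u_0,w_0)$. By Brezis's theorem, $v\in W^{1,\infty}_{loc}((0,\infty);H)$, $v(t)\in\dom{\partial_j\E}$ for a.e.\ $t>0$, and $-\dot v(t)\in\partial_j\E(v(t))$. Unpacking the definition of the $j$-subgradient together with the characterization computed just before the theorem, at each such $t$ there exists a triple $(u_i(t),u_e(t),w(t))\in V$ with $j(u_i(t),u_e(t),w(t))=v(t)$ which solves the stationary system \eqref{s1}--\eqref{s5} in the weak sense with $f=-\partial_t u(t)$, $g=-\partial_t w(t)$. By construction this is precisely a strong solution of \eqref{b1}--\eqref{b6}.

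The energy decay is the classical gradient-flow identity: $t\mapsto\E^H(v(t))$ is nonincreasing and locally absolutely continuous on $(0,\infty)$ by \cite[Th\'eor\`eme~3.6]{Br73}, and the smoothing effect of the semigroup associated with a semiconvex functional guarantees $v(t)\in\dom{\E^H}$ for every $t>0$. Since $\E^H(v)=\min\{\E(\hat u):j(\hat u)=v\}$, with the minimum attained uniquely thanks to the strict convexity coming from assumptions~\eqref{q1} and~\eqref{f}, the triple $(u_i(t),u_e(t),w(t))$ can be taken as the unique minimizer in its fibre and thus satisfies $\E(u_i(t),u_e(t),w(t))=\E^H(v(t))$. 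Hence $t\mapsto\E(u_i(t),u_e(t),w(t))$ is nonincreasing, locally absolutely continuous, and locally bounded on $(0,\infty)$. Boundedness of the sublevel sets of $\E_\omega$ in $V$, which was the key estimate in the proof of Lemma~\ref{l}, then forces $(u_i,u_e,w)\in L^\infty_{loc}((0,\infty);V)$.

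The main obstacle I expect is precisely this lifting step: since $j$ is not injective, one has to produce a genuine $V$-valued strong solution from an $H$-valued orbit and ensure that the selection is measurable in $t$ and satisfies the weak PDE pointwise a.e. Uniqueness of the full triple $(u_i,u_e,w)$ reduces to uniqueness of $(u,w)=v$ (which is the semigroup property on $H$) together with uniqueness of the splitting $u=u_i-u_e$ under the normalization $\int_\Omega u_e=0$ built into $V$, and this last point is again a consequence of the strict convexity of $Q_i$, $Q_e$ assumed in~\eqref{q1}.
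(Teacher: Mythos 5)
Your proposal is correct and follows essentially the same route as the paper: Lemma \ref{l} together with \cite[Corollaries 2.7 and 2.10]{ChHaKe16} and \cite[Th\'eor\`eme 3.6]{Br73} for semigroup generation and the Lipschitz bound, then lifting the $H$-valued orbit to $V$ via the unique (by strict convexity of $Q_i$, $Q_e$) minimizer of $\E$ on each fibre $j^{-1}(v(t))$, the identity $\E (u_i,u_e,w) = \E^{H}(u,w)$ for the energy decay, and coercivity of $\E_\omega$ for the $L^\infty_{loc}((0,\infty );V)$ bound. The one step you flag but leave open --- measurability in $t$ of the selection $(u_i,u_e,w)$ --- is exactly the point the paper also treats only briefly, by a one-line appeal to standard $\Gamma$-convergence arguments using the lower semicontinuity of $\E$, so your assessment of where the genuine work lies matches the paper.
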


\begin{proof}
Let the energy space $V$, the energy functional $\E$, the mapping $j$ and the $j$-subgradient $\partial_j\E$ be defined as above. By Lemma \ref{l}, $\E$ is lower semicontinuous and $j$-elliptic. Moreover, $j(\dom{\E})$ is dense in $L^2 (\Omega )\times L^2 (\Omega )$. As we recalled above, there exists a semiconvex, lower semicontinuous energy $\E^{L^2\times L^2} : L^2 (\Omega )\times L^2 (\Omega ) \to \R\cup\{ +\infty\}$ with dense effective domain such that $\partial_j\E = \partial \E^{L^2\times L^2}$. %Since $\dom{\E^{L^2\times L^2}} = j(\dom{\E})$, and by Lemma \ref{l}, the functional $\E^{L^2\times L^2}$ has dense effective domain.  
Thus, $-\partial_j\E$ generates a nonlinear, strongly continuous semigroup $S = (S_t)_{t\geq 0}$ of Lipschitz continuous mappings on $L^2 (\Omega ) \times L^2 (\Omega )$. Since $\E_\omega$ is convex when $\omega$ is chosen as in assumption \eqref{f}, then $\| S_t\|_{Lip}\leq e^{\omega t}$. Moreover, for every $(u_0,w_0) \in L^2 (\Omega )\times L^2 (\Omega )$ the function $(u,w) = S_\cdot (u_0,w_0)\in C([0,\infty ) , L^2 (\Omega )\times L^2 (\Omega )) \cap W^{1,\infty}_{loc} ((0,\infty ) ; L^2 (\Omega )\times L^2(\Omega ))$ is the unique strong solution of the abstract gradient system \eqref{eq.subgrad}. It satisfies $(u(t),w(t))\in\dom{\partial_j\E}$ for almost every $t\in (0,\infty )$, and $u(0) = u_0$, $w(0)=w_0$. Finally, $\E^{L^2\times L^2} (u,w)$ is a decreasing, locally absolutely continuous function on $(0,\infty )$. 

Using the definition of the $j$-subgradient, we see that for almost every $t\in (0,\infty )$ there exists a pair $(u_i(t),u_e(t))\in W^{1,p} (\Omega ) \times W^{1,p} (\hat{\Omega} )$ such that $(u_i(t),u_e(t),w(t))\in V$ and $u_i (t) - u_e (t) = u(t)$, and such that $(u_i,u_e,w)$ is a weak solution of the system \eqref{s1}--\eqref{s5} with $f = -\partial_t u (t)$ and $g=-\partial_t w(t)$. Actually, the element $(u_i(t),u_e(t),w(t))$ is a minimizer of $\E$ on the set $\{ u_i-u_e = -\partial_t u(t)\}$.  Using the strict convexity of $Q_i$ and $Q_e$ (assumption \eqref{q1}), the latter description implies that the pair $(u_i(t),u_e(t))$ is uniquely determined. Standard arguments on $\Gamma$-convergence, using the lower semicontinuity of $\E$, imply that the function $(u_i,u_e,w)$ is measurable with values in $V$. 

Recall that $\E^{L^2\times L^2} (u,w) = \E (u_i,u_e,w)$ for almost every $t\in (0,\infty )$. This equality implies that the function $\E (u_i,u_e,w)$ is decreasing and locally absolutely continuous on $(0,\infty )$. In particular, $\E (u_i,u_e,w)$ is locally bounded. Since $(u,w)$ is locally bounded with values in $L^2 (\Omega )\times L^2 (\Omega )$, we deduce that $\E_\omega (u_i,u_e,w)$ is locally bounded, where $\omega$ is chosen as in the proof of Lemma \ref{l}. Since $\E_\omega$ is coercive, this implies that $(u_i,u_e,w)$ is locally bounded with values in $V$. The function $(u_i,u_e,w)$ is thus unique strong solution of the bidomain system \eqref{b1}--\eqref{b6}. 
\end{proof}

\section{The semilinear case and first numerical tests} 

Given $M_i\in L^\infty (\Omega , \R^{N\times N} )$, $M_e \in L^\infty (\hat{\Omega} , \R^{N\times N})$ uniformly elliptic and symmetric coefficients, the semilinear system %$F(s) + \frac{\omega}{2} s^2$
\begin{align}
\label{l1} \partial_t (u_i-u_e) - \dv (M_i \nabla u_i ) + \frac{\partial F}{\partial u} (u_i-u_e,w) & = 0 && \text{in } (0,\infty )\times \Omega ,\\
 \dv (M_i \nabla u_i ) + \dv (M_e \nabla u_e ) & = 0 && \text{in } (0,\infty )\times \hat{\Omega} , \\
% \dv (M_e \nabla u_e ) & = 0 && \text{in } \hat{\Omega}\setminus \Omega , \\
% u_i - u_e & = v  && \text{in } \Omega , \\
 (M_i \nabla u_i) \cdot n & = 0 && \text{on } (0,\infty )\times \partial\Omega , \\
% (M_e \nabla u_e) \cdot n & = 0 && \text{on } \partial\Omega , \\
\label{l4} (M_e \nabla u_e) \cdot n & = 0 && \text{on } (0,\infty )\times \partial\hat{\Omega} , \\
\label{l5} \tau\, \partial_t w + \frac{\partial F}{\partial w} (u_i-u_e,w) & = 0 && \text{in } \Omega , \\
\label{l6} u_i (0,\cdot ) - u_e (0,\cdot ) & = u_0  && \text{in } \Omega , \\
\label{l7} w(0) & = w_0 && \text{in } \Omega . 
\end{align}
is a special case of the system \eqref{b1}--\eqref{b6}. This system of PDEs may be considered as a (very) simplified model describing the electrical activity of a neural tissue and the propagation of the electrical signals within this tissue. In fact, brain activity is the result of complex electro-chemical reactions resulting in the creation of an electric field propagating in all areas of the brain, as well as in the cranium. This electric field, called electroencephalogram (EEG), is measured by placing electrodes at specific locations of the skull. %The EEG is used as a potential biomarker for the diagnosis of some central nervous system diseases (CNS), such as epilepsy \cite{Sc69,Bz15}. 
The model is also widely used for simulating the electrical activity in the heart (electrocardiogram, ECG). %,  in the heart electrical activity but its use in measuring the electrical neural activity, which is more complex, is at the best, a rough macroscopic description of such an activity. 
Based on a neuron model and appropriate conductivity tensors, the propagation of the electrical signal in a neural tissue (or in the heart) may be derived from the Hodgkin-Huxley model \cite{HoHu52} resulting, after some simplifications and a homogenization process, in the bidomain system given above, or actually a linear perturbation of this system in the ordinary differential equation \eqref{l5}.

Similar to the cardiac tissue, the neuronal tissue, defined by the domain $\Omega$, can be modeled by decomposing it into three distinct regions: the cells forming the intracellular domain, the extracellular domain representing the outside of the cells, and the cellular membrane separating them. Each zone has an intracellular, an extracellular and a membrane potential, namely the functions $u_i$, $u_e$ and the voltage across the membrane, which is defined by the difference between them. The electrical activity in the skull (where the electrodes are located) is given by the equations in $\widehat{\Omega}\setminus\Omega$. There exist various models for the semilinear terms in equations \eqref{l1} and \eqref{l5} which describe the ionic currents and represent a simplification of the Hodgkin-Huxley system of equations. A well known model is the FitzHugh-Nagumo model (see FitzHugh \cite{Fi55,Fi61} and Nagumo, Arimoto \& Yoshizawa \cite{NaArYo62}), in which the equations \eqref{l1} and \eqref{l5} are replaced by (note that $u=u_i-u_e$)
\begin{align}
\label{l1s} \tag{\ref{l1}'} \partial_t u - \dv (M_i \nabla u_i ) + u(u-a)(u-1)+w & = 0 && \text{in } (0,\infty )\times \Omega ,\\
\label{l5s} \tag{\ref{l5}'} \tau\, \partial_t w - u + \lambda +\mu w & = 0 && \text{in } \Omega ,
\end{align}
for some constants $a\in [0,1]$, $\lambda$, $\mu\in\R$. When we choose a double well potential $G$ with $G'(u) = u(u-a)(u-1)$ and when we set
\[
 F(u,w) = G(u) +uw +\lambda w + \frac{\mu}{2} w^2 ,
\]
so that 
\begin{align*}
\frac{\partial F}{\partial u} (u,w) & = u(u-a)(u-1)+w \quad \text{ and} \\
\frac{\partial F}{\partial w} (u,w) & = u + \lambda +\mu w ,
\end{align*}
then equations \eqref{l1} and \eqref{l1s} coincide, but the ordinary differential equation \eqref{l5} slightly differs from equation \eqref{l5s} due to the different sign in front of $u$. We proceed nevertheless as in the previous section, adding however a linear perturbation in the ordinary differential equation. The energy space is now
\[
 V = \{ (u_i,u_e,w)\in H^1 (\Omega ) \times H^1 (\hat{\Omega} ) \times L^2 (\Omega ) : \int_\Omega u_e = 0\} ,
\]
and the energy $\E : V \to \R\cup\{+\infty\}$ is given by 
\[
\E (u_i,u_e,w) = \frac12 \int_\Omega (M_i \nabla u_i ) \, \nabla u_i + \frac12 \int_{\hat{\Omega}} (M_e \nabla u_e ) \, \nabla u_e + \int_\Omega F(u_i-u_e,w) . 
\]
For simplicity, we restrict ourselves to dimension $N=2$ and hence, since the growth conditions \eqref{q5} and \eqref{f2} are satisfied, and as a consequence of the Sobolev embedding theorem, the energy is continuously differentiable on the energy space. Thanks to the $j$-gradient structure we resort to a descent method to solve the bidomain problem. Given a partition $\sigma : 0=t_0 <t_1 <\dots < t_n =T$ of a bounded intervall $[0,T]$, and given initial values $u_0$, $w_0\in L^2(\Omega)$, we are seeking piecewise affine functions $u$, $w\in C([0,T];L^2(\Omega ))$ (affine on the intervalls $[t_k,t_{k+1}]$) solving the implicit Euler scheme
\begin{equation} \label{eq1}
\begin{split}
& \frac{(u(t_{k+1}),w(t_{k+1})) - (u(t_k) ,w(t_k))}{t_{k+1}-t_k} + \d_j\E (u(t_{k+1}),w(t_{k+1})) \ni (0,\frac{2}{\tau} \, u(t_{k+1}) ) \\ %\quad (0\leq k\leq n-1), \\
& u(0) = u_0, \quad w(0) = w_0 ,
\end{split}
\end{equation}
where the right-hand side is a correction term arising from the Fitzhugh-Nagumo model (note that we replace the ordinary differential equation \eqref{l5} by \eqref{l5s}). Using the definition of the $j$-subgradient and writing as before $u = u_i-u_e$, this implicit Euler scheme leads to the following variational formulation, in which all functions $u_i$, $u_e$ and $w$ are assumed to take values in $L^2 (\hat{\Omega} )$ and in which we added regularization terms:  
\begin{align*}
& \int_\Omega \frac{u(t_{k+1})-u(t_k)}{t_{k+1}-t_k} \, \varphi + \int_\Omega M_i \nabla u_i (t_{k+1}) \, \nabla\varphi + \int_\Omega (G ( u (t_{k^*})) + w (t_{k+1}) ) \,\varphi  && \\
& \quad + \tau\, \int_\Omega \frac{w (t_{k+1}) - w (t_k)}{t_{k+1}-t_k} \,\chi + \int_\Omega (-u (t_{k+1}) + \lambda + \mu w (t_{k+1})) \,\chi && \\ 
& \quad - \int_\Omega M_i \nabla u_i (t_{k+1}) \, \nabla\psi + \int_\Omega M_e \nabla u_e (t_{k+1}) \,\nabla\psi +\ve \int_\Omega u_e (t_{k+1}) \, \psi && \\
& \quad + \int_{\hat{\Omega}\setminus\Omega} M_e \nabla u_e (t_{k+1})\, \nabla\psi % && \\%+ \ve \, \int_{\hat{\Omega}\setminus\Omega} u_e (t_{k+1})\,\psi && \\ 
%& \quad + 
+ { \ve \int_{\hat{\Omega}\setminus\Omega} u_i (t_{k+1})\, \phi } + { \ve \, \int_{\hat{\Omega}\setminus\Omega} w (t_{k+1}) \,\chi} && \\
%\quad + (\frac{w^{n+1}-w^n}{k},\eta)-\tau^{-1}(u^{n+1}-\lambda-\mu w^{n+1},\eta)_{\O_1}+\textcolor{red}{\ve (w^{n+1},\eta)_{\O_2} }&&\\ 
&\quad =0 \qquad \text{for every } (\varphi, \psi ,\chi )\in H^1 (\widehat\O)^3 . &&
\end{align*}
Here, in the first line, either $k^* = k$ or $k^* = k+1$, depending on whether the nonlinear term is treated explicitly or in an implicit way using any standard scheme (for example, Newton's method). The algorithm is respectful of the physiological transmission and boundary conditions of the model. We emphasize that the abstract theory of the $j$-subgradient ensures the existence of (only) an implicit energy on $L^2 (\Omega )\times L^2 (\Omega )$. However, our algorithm uses exactly the $j$-subgradient associated with the system \eqref{l1}--\eqref{l4}, \eqref{l5s}, \eqref{l6} and \eqref{l7}. The convergence of the algorithm, the stability and other issues will not be considered in this article, however, notice that we have a gradient structure for the system which allows us to consider such questions in the framework of the standard numerical analysis of gradient theory. As a proof of the concept, we present two numerical examples. 

\begin{figure}[h]
          {\epsfig{
           %file=../manif.eps ,
           file=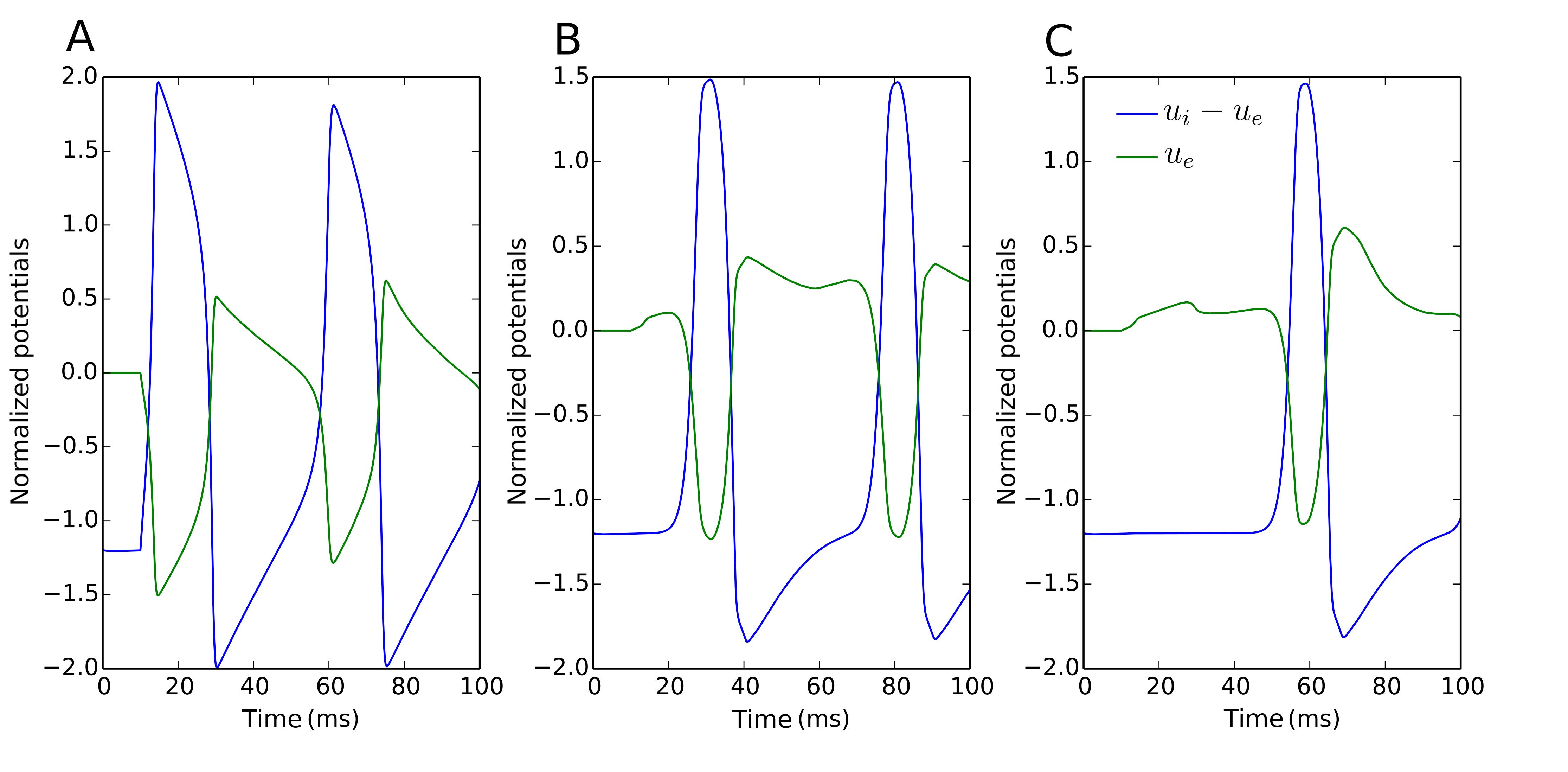 ,
           angle=0, width=120mm}
          }
\caption{Evolution of $u = u_i-u_e$ and $u_e$ at the points $(0.5,0.5)$, $(0.5,0.7)$ and $(0.5,0.9)$ in $\Omega$}
\end{figure}

In the first example, $\Omega$ is the unit disk centered at the origin. In Figure 1 we show, the action potential as a function of time evaluated at three particular points in $\Omega$ and in the case of the absence of skull, that is, $\widehat\Omega=\Omega$.  The values of the various parameters are taken from Bedez \cite{Bz15}. Namely, we apply a current $I_{\text{app}}=0.4\mu A$ on a disk of radius $0.1$ and we take $M_i=0.638 \, Id$, $M_e=1.538 \, Id$ (where $Id$ is the identity matrix).
We note that even if the bidomain system of equations may appear to be too rough and less realistic for modelling  the electrical neural activity (contrary to the case of heart activity where the cells are all similar and complex transmission processes are absent), the result obtained looks, at a macroscopic scale of a tissue of neurons, similar  to several results in the literature obtained with other models derived from the Hodgkin-Huxley theory for biological cells; see, for example, Sadleir \cite{Sa10} and Bedez \cite{Bz15} in the case of the neurons or Coudi\`ere, Pierre, Rousseau \& Turpault \cite{CPRT09} and Colli-Franzone, Pavarino \& Savar\'e \cite{CoPaSa06} in the case of the electrical heart activity. 

\begin{figure}[t]
  \centering
  \includegraphics[width=6.2cm]{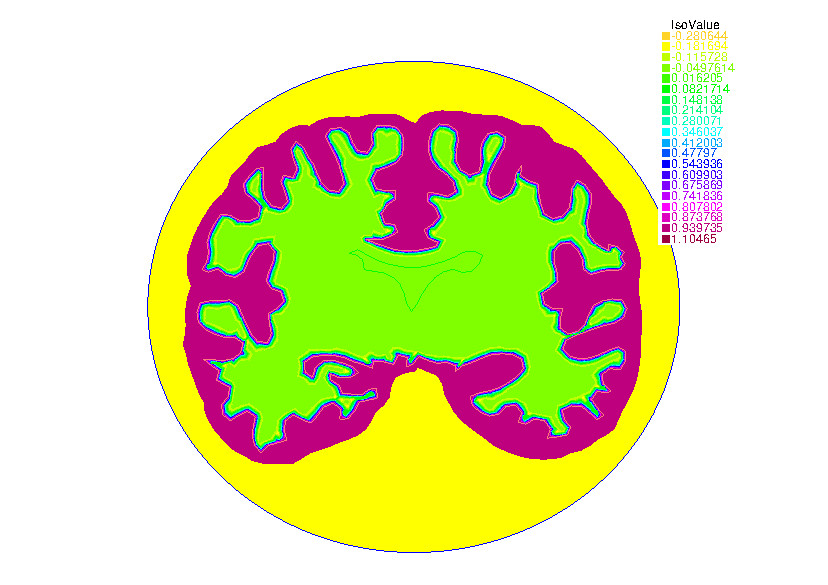}\ 
  \includegraphics[width=6.2cm]{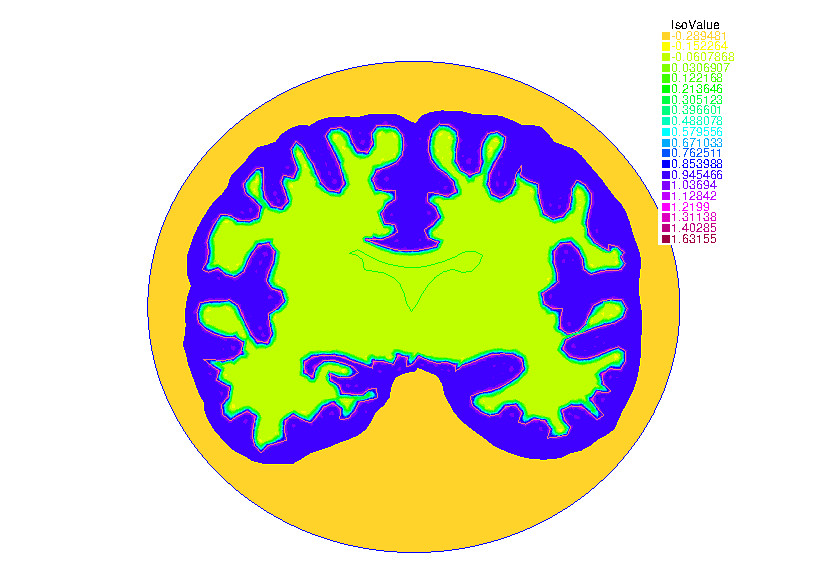} \\
    \includegraphics[width=6.2cm]{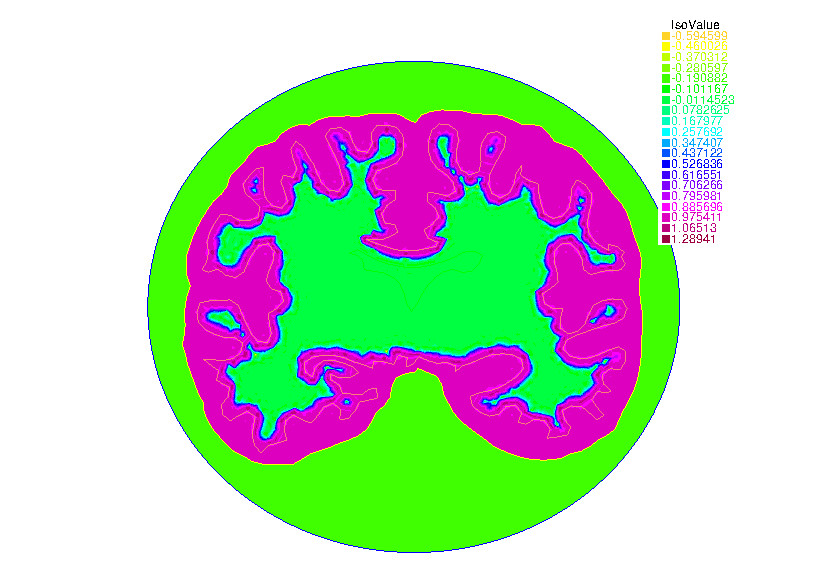}\ 
  \includegraphics[width=6.2cm]{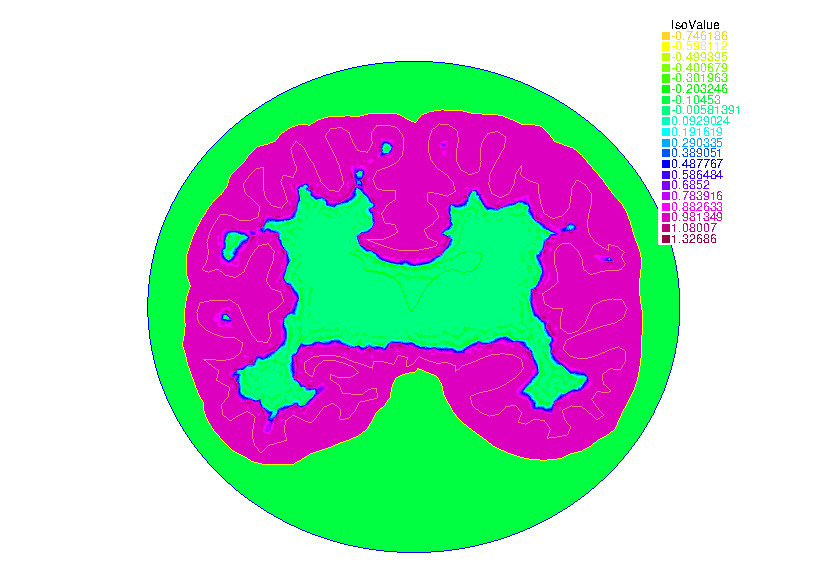} \\
    \includegraphics[width=6.2cm]{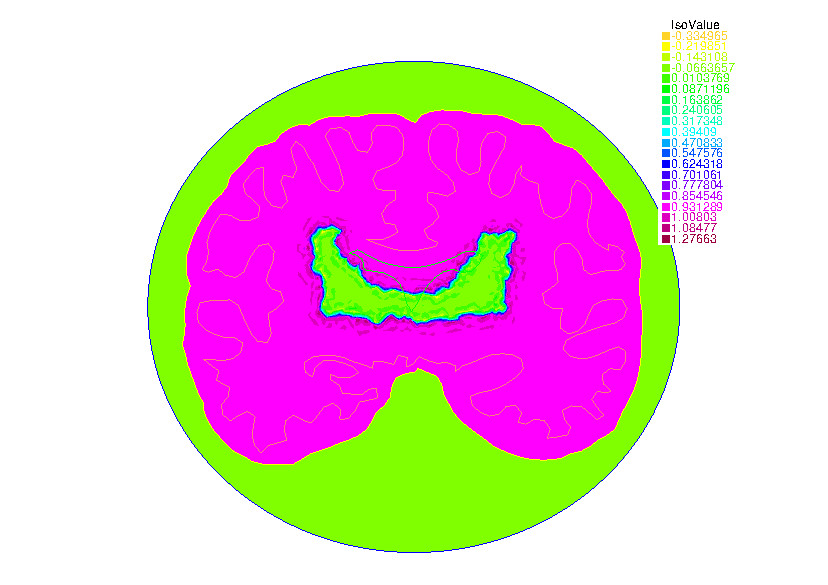}\ 
  \includegraphics[width=6.2cm]{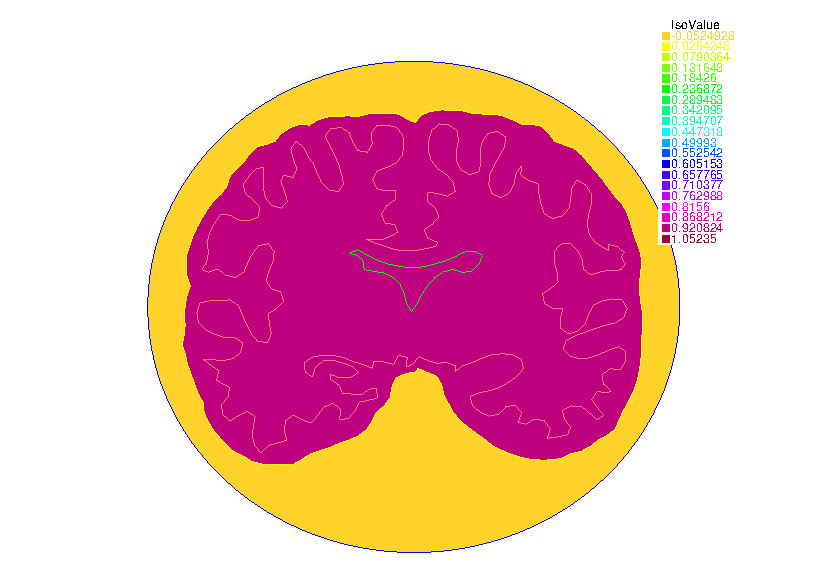} \\

%\vskip-11cm
  \caption{Evolution of $u = u_i-u_e$ in $\Omega$ and $u_e$ in $\hat{\Omega}\setminus\Omega$ at six different times}
  \label{fig:picture1012}
\end{figure}

In the second example, the underlying domains $\Omega$ and $\hat{\Omega}\setminus\Omega$ are two-dimensional models of a human brain and the skull, respectively.  In this example the conductivities are the diagonal matrices $M_i = {\rm diag}\, (0.41,0.47)$ and $M_e= {\rm diag}\, (0.29, 0.61)$ in $\Omega$, and $M_e=1.2 \, Id$ in the skull. We initialize $u=1$ in the cortex region and $u=0$ elsewhere. In Figure 2, we show the evolution of $u = u_i-u_e$ in $\Omega$ and of $u_e$ in $\hat{\Omega}\setminus\Omega$ at six different times. We do not apply any electrical stimulus and we observe the effect of anisotropy and complex geometry of the model. To verify the relevance of the model for neurons we should distinguish the conductivities in different regions of the brain (and also in grey and white matter) but this is not our purpose in this article.

We emphasize that the theoretical framework and the algorithm considered here permit to solve the bidomain problem for various settings (with or without skull, different boundary conditions and nonlinear potentials, \dots). However, a serious numerical analysis as performed in Coudi\`ere et al. \cite{CPRT09} and Colli Franzone et al. \cite{CoPaSa06} (see also the references therein) should be conducted to complete this study. In particular, the algorithm with the double well potential yields, in most cases, the convergence to a stationary solution (the nonlinearity ensures the transition from left to right potentials) but the choice of several parameters (the regularization coefficients, the step size, the mesh size, \dots) are not yet studied. Such choices are crucial to capture relevant solutions, for example, travelling waves.

The point of view of the $j$-subgradient for elliptic-parabolic systems like the bidomain model, in addition to be an elegant approach, makes both the analysis and the computations easier, as it may benefit from the standard theory and numerical tools provided in a unified gradient systems framework.

\medskip

{\bf Acknowledgment.} The authors are most grateful to Fr\'ed\'eric Hecht for providing a mesh of a human brain for the numerical test.

\nocite{AmCoSa00}
\nocite{CoPaSa06}
\nocite{PeSaCo05}
\nocite{CoSa02}
\nocite{CoPaSc14}
\nocite{NeKr93}
\nocite{Ve06,Ve09}
\nocite{BeCh15}

\providecommand{\bysame}{\leavevmode\hbox to3em{\hrulefill}\thinspace}

\bibliographystyle{amsplain}
%\bibliography{../ralph}

\def\cprime{$'$}
  \def\ocirc#1{\ifmmode\setbox0=\hbox{$#1$}\dimen0=\ht0 \advance\dimen0
  by1pt\rlap{\hbox to\wd0{\hss\raise\dimen0
  \hbox{\hskip.2em$\scriptscriptstyle\circ$}\hss}}#1\else {\accent"17 #1}\fi}
  \def\cprime{$'$} \def\cprime{$'$} \def\cprime{$'$}
\providecommand{\bysame}{\leavevmode\hbox to3em{\hrulefill}\thinspace}
\providecommand{\MR}{\relax\ifhmode\unskip\space\fi MR }
% \MRhref is called by the amsart/book/proc definition of \MR.
\providecommand{\MRhref}[2]{%
  \href{http://www.ams.org/mathscinet-getitem?mr=#1}{#2}
}
\providecommand{\href}[2]{#2}

\end{document}